\newenvironment{resume}[1]{
	\begin{list}{}{
		\setlength{\leftmargin}{1cm}
		\setlength{\rightmargin}{1cm}
	}\item[]
	{\bf #1}
	}{\end{list}}
\theoremstyle{plain}
\newtheorem{thm}{Théorème}
\newtheorem{lem}{Lemme}
\newtheorem*{thm*}{Théorème}
\newcommand{\mot}{}
\newtheorem*{thmref_interne}{\mot{}}
\newcommand\mb{\mathbb}
\newcommand\mc{\mathcal}
\newcommand\mf{\mathfrak}
\newcommand\mr{\mathrm}
\newcommand\ms{\mathscr}
\newcommand\C[1]{\mb{C}^{#1}}
\newcommand\OC[1]{\mc{O}_{(\C{#1},0)}}
\newcommand\Cg[1]{(\C{#1},0)}
\title{
Décomposition en éléments simples des formes méromorphes formelles fermées
}
\author{Olivier Thom}
\date{\today}
\begin{document}
\maketitle

\begin{resume}{Résumé :}
On prouve que toute $1$-forme méromorphe formelle fermée admet une décomposition "en éléments simples", ce qui permet en particulier de définir une notion de résidu pour les formes méromorphes formelles fermées qui étend la notion définie pour les formes usuelles.
\end{resume}

\begin{resume}{Abstract:}
We show that any closed formal meromorphic $1$-form admits a "partial fraction decomposition", which allows us in particular to define a notion of residue for closed formal meromorphic forms which extends the notion defined for usual forms.
\end{resume}

Dans toute la suite, on note $\mc{O}_n = \OC{n}$ l'anneau des germes de fonctions holomorphes à l'origine de $\mb{C}^n$ et $\Omega^1 = \Omega^1(\Cg{n})$ l'ensemble des germes de $1$-formes holomorphes sur $\Cg{n}$.
On désigne par $\widehat{\mc{O}}_n = \mb{C}[[x_1,\ldots,x_n]]$ et $\widehat{\Omega}^1 = \Omega^1\otimes \widehat{\mc{O}}_n$ leurs complétés formels.
Notons aussi $\widehat{\mc{M}}_n= \mr{Frac}(\widehat{\mc{O}}_n)$ et $\widehat{\Omega}_{\mc{M}}^1 = \Omega^1\otimes \widehat{\mc{M}}_n$.
Par analogie, on pourra appeler "fonction méromorphe formelle" un élément de $\widehat{\mc{M}}_n$ et "forme méromorphe formelle" un élément de $\widehat{\Omega}_{\mc{M}}^1$.

Soit $\omega$ un germe de $1$-forme méromorphe fermée à l'origine de $\mb{C}^n$ ; on peut écrire $\omega = \frac{\eta}{f_1^{n_1+1}\ldots f_p^{n_p+1}}$ où $\eta\in \Omega^1(\mb{C}^n,0)$, les $f_i$ sont des éléments irréductibles de $\mc{O}_n$, et les $n_i+1\in \mb{N}$ sont les multiplicités des pôles $f_i=0$.
Dans \cite{cema_asterisque}, les auteurs prouvent le résultat de décomposition "en éléments simples" suivant :
\[
\omega = \sum_i{\lambda_i \frac{df_i}{f_i}} + d \left(\frac{F}{f_1^{n_1}\ldots f_p^{n_p}} \right),
\]
où $F$ est un germe de fonction holomorphe et les $\lambda_i\in \mb{C}$ sont les résidus de $\omega$ qui se calculent en intégrant $\omega$ le long d'un petit chemin autour du pôle $f_i=0$.

L'analogue formel de cette décomposition a été conjecturé dans \cite{cgb} car il n'existe pas de référence pour ce fait.
La principale différence avec le cas convergent est qu'on ne peut pas intégrer une forme formelle le long d'un chemin pour utiliser le théorème des résidus.
On se propose dans ce papier de donner une preuve de cette décomposition dans le cas formel : plus précisément, on prouve le théorème suivant :

\begin{thm}
\label{thm1}
Soient $\eta\in \widehat{\Omega}^1$, $f_1,\ldots,f_p\in \widehat{\mc{O}}_n$ irréductibles et $n_1,\ldots,n_p$ des entiers tels que $\omega:=\frac{\eta}{f_1^{n_1+1}\ldots f_p^{n_p+1}}$ soit fermée.
Alors il existe des nombres $\lambda_1,\ldots,\lambda_p$ et une série formelle $F\in \widehat{\mc{O}}_n$ tels que
\[
\omega = \sum_i \lambda_i \frac{df_i}{f_i} + d \left(\frac{F}{f_1^{n_1}\ldots f_p^{n_p}} \right).
\]
\end{thm}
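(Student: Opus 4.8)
The plan is to peel off, one at a time, exact forms of the prescribed shape $d(c/\prod_i f_i^{n_i})$ so as to lower the orders of the poles of $\omega$ until only simple poles remain, and then to treat the logarithmic case directly. Throughout I induct on the total pole order, writing the current form as $\eta/\prod_i f_i^{m_i}$ with $m_i\le n_i+1$.

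First I would record the consequence of closedness. Expanding $d\omega=0$ gives the meromorphic identity $d\eta=\sum_i m_i\,(df_i/f_i)\wedge\eta$; clearing denominators and reducing modulo $f_j$ annihilates every term except the $j$-th, and since $\prod_{k\ne j}f_k$ is a unit modulo the irreducible $f_j$, this yields $df_j\wedge\eta\equiv0\pmod{f_j}$ for each $j$ with $m_j\ge1$. The formal de Rham–Saito division lemma (valid over $\widehat{\mc{O}}_n$ because $f_j$ irreducible forces the singular locus of $\{f_j=0\}$ to have codimension $\ge2$) then upgrades this to $\eta=a_j\,df_j+f_j\,\beta_j$ for some $a_j\in\widehat{\mc{O}}_n$ and $\beta_j\in\widehat{\Omega}^1$; in particular the leading polar part of $\omega$ along $f_j$ is proportional to $df_j$.

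Now comes the inductive reduction. If some $m_j\ge2$, I would look for $c\in\widehat{\mc{O}}_n$ such that $G=c/\prod_i f_i^{m_i-1}$ has the same order-$m_j$ polar part along $f_j$ as $\omega$. Comparing the coefficients of $df_j/f_j^{m_j}$ reduces this to the single congruence $(m_j-1)\,c\,\prod_{i\ne j}f_i\equiv-a_j\pmod{f_j}$, which is solvable precisely because $m_j-1\ne0$ and $\prod_{i\ne j}f_i$ is invertible modulo $f_j$. Subtracting $dG$ then lowers the order along $f_j$ by one while, crucially, not raising it along any other $f_i$, since $G$ (hence $dG$) has pole order $\le m_i\le n_i+1$ there; the new form is still closed, with strictly smaller total pole order. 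As $G$ has poles of order $m_i-1\le n_i$ along each $f_i$, every exact form subtracted in this way is already of the target shape $d(F/\prod f_i^{n_i})$, so putting them over the common denominator $\prod_i f_i^{n_i}$ causes no trouble.

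This leaves the base case where all poles are simple. Writing again $\eta=a_i\,df_i+f_i\,\beta_i$, the residue of $\omega$ along $f_i=0$ is the class of $a_i/\prod_{k\ne i}f_k$ in $\widehat{\mc{O}}_n/(f_i)$; expanding $d\omega=0$ shows its differential vanishes tangentially to $\{f_i=0\}$, and since $f_i$ is irreducible this formal function must be a constant $\lambda_i\in\mb{C}$. This is exactly the step that replaces \emph{integrating $\omega$ around the pole}, and I expect it to be the main obstacle: one must define the residue and prove its constancy purely algebraically, without the residue theorem. Subtracting $\sum_i\lambda_i\,df_i/f_i$ then gives a closed form whose simple poles all have zero residue, hence a closed holomorphic form, which the formal Poincaré lemma writes as $dF$. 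Collecting $dF$ with all the previously subtracted exact forms yields the claimed decomposition.
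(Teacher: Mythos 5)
Your strategy is the classical one (lower the polar orders by subtracting exact forms of the prescribed shape, then treat the logarithmic part via residues) and is entirely different from the paper's, which in dimension $2$ reduces to the convergent case via finite determinacy of $f_1\cdots f_p$ and a flatness argument, and in dimension $n$ slices by a generic pencil of $2$-planes. Unfortunately the reduction breaks down exactly where the difficulty of the theorem is concentrated. The most serious gap is the inductive step: the congruence $(m_j-1)\,c\,\prod_{i\ne j}f_i\equiv -a_j\pmod{f_j}$ is in general \emph{not} solvable for $c\in\widehat{\mc{O}}_n$, because $\prod_{i\ne j}f_i$ is not invertible modulo $f_j$ — each $f_i$ vanishes at the origin and therefore lies in the maximal ideal of the local domain $\widehat{\mc{O}}_n/(f_j)$. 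Solvability requires $a_j$ to be divisible by $\prod_{i\ne j}f_i$ in $\widehat{\mc{O}}_n/(f_j)$, i.e. that the leading polar part of $\omega$ along $f_j$ be globally meromorphic with denominator exactly $\prod_{i\ne j}f_i^{m_i}$; this is essentially what the theorem asserts (a posteriori $a_j\equiv -n_j F\prod_{i\ne j}f_i$ modulo $f_j$) and nothing in your argument establishes it. Inverting $\prod_{i\ne j}f_i$ only in the fraction field would give a meromorphic $c$, so $G$ would acquire new poles along the other branches and the induction would collapse.

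Two further steps are also unjustified. The division $df_j\wedge\eta\equiv0\pmod{f_j}\Rightarrow\eta=a_j\,df_j+f_j\beta_j$ does not follow from irreducibility alone: the relative de Rham--Saito lemma needs the singular locus of $\{f_j=0\}$ to be of codimension at least $2$ \emph{inside the hypersurface}, not merely in $\Cg{n}$. Already for the cusp $f=y^2-x^3$ in $\Cg{2}$, the form $\eta=y\,dx-\tfrac{2}{3}x\,dy$ satisfies $df\wedge\eta=-2f\,dx\wedge dy$ yet cannot be written as $a\,df+f\beta$ (restrict the $dy$-coefficient to the normalization $t\mapsto(t^2,t^3)$ and compare orders in $t$). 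Finally, the constancy of the residue in the base case — which you rightly identify as the crux replacing the residue theorem — is only asserted: one must prove that an element of $\mathrm{Frac}\bigl(\widehat{\mc{O}}_n/(f_i)\bigr)$ with vanishing tangential differential is a constant, which is true for an irreducible formal germ but requires a proof (Weierstrass preparation and extension of derivations to the finite field extension), and one must also control the poles that $a_i/\prod_{k\ne i}f_k$ a priori has along the intersections with the other branches. As it stands, the proposal reproduces the skeleton of the convergent proof while leaving open precisely the points that make the formal statement nontrivial.
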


Le cas $n=1$ est bien connu puisqu'une $1$-forme méromorphe formelle est formellement conjuguée à une $1$-forme méromorphe ordinaire.
Dans \cite{cgb}, les auteurs donnent un plan de preuve en dimension $2$ basée sur la résolution des singularités du dénominateur ; on propose en partie \ref{sec_dim2} une autre preuve basée sur un argument cohomologique, toujours en dimension $2$.

Pour le cas général, on passera de la dimension $n$ à la dimension $2$ en fixant un faisceau de $2$-plans linéaires générique $\mc{H} = \{H_t\}_{t\in \mb{P}^{n-2}}$.
On pourra alors utiliser le cas $n=2$ sur chaque plan $H_t$, ce qui nous fournira en particulier des résidus $\lambda_i(t)$.
Une étape délicate à traiter est le problème que cette méthode ne donne aucune information sur la régularité des fonctions $\lambda_i(t)$.
On traitera ce point en partie \ref{sec_famille} en montrant qu'en dimension $2$, on peut en fait décomposer une famille analytique $\omega_t$ en famille, ce qui permettra de prouver que chaque $\lambda_i(t)$ est analytique en $t$.

Le but de cette étape est de pouvoir dériver les fonctions $\lambda_i(t)$, ce qui permet d'utiliser l'hypothèse que la forme $\omega$ dont on était parti est fermée pour conclure (cf. section \ref{sec_fin}).

\section{Le cas de la dimension $2$}
\label{sec_dim2}

\begin{lem}
\label{lem_coho}
Le théorème est vrai si les $f_i$ convergent.
\end{lem}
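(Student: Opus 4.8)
\emph{Plan de preuve.} Comme les $f_i$ convergent, le diviseur $D=\{f_1\cdots f_p=0\}$ est un germe analytique véritable et l'on peut fixer une borne $N$ sur l'ordre des pôles. L'idée est de reformuler la décomposition cherchée comme un énoncé de cohomologie de de Rham méromorphe à pôles le long de $D$. On introduit le complexe $\Omega^\bullet_{\le N}$ des germes de formes méromorphes convergentes dont les pôles le long de $D$ sont d'ordre borné (disons d'ordre $\le N-1+k$ en degré $k$, de sorte que $d$ le préserve), ainsi que son complété formel $\widehat{\Omega}^\bullet_{\le N}=\Omega^\bullet_{\le N}\otimes_{\mc O_n}\widehat{\mc O}_n$. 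La forme $\omega$, les formes logarithmiques $df_i/f_i$ et les primitives $d(F/\prod_i f_i^{n_i})$ vivent toutes dans ce complexe ; il suffit donc de montrer que l'inclusion $\Omega^\bullet_{\le N}\hookrightarrow\widehat{\Omega}^\bullet_{\le N}$ induit un isomorphisme sur le $H^1$. En effet, la classe de $\omega$ proviendra alors d'une forme convergente fermée, à laquelle on appliquera directement le résultat de \cite{cema_asterisque}.

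\emph{Obstacle principal.} La difficulté est que la différentielle $d$ n'est pas $\mc O_n$-linéaire, ce qui interdit d'appliquer tel quel la platitude de $\widehat{\mc O}_n$ sur $\mc O_n$ au complexe entier. Pour contourner ce point, je filtrerais $\Omega^\bullet_{\le N}$ par l'ordre des pôles. Le calcul $d(\alpha/f^{k})=(f\,d\alpha-k\,\alpha\wedge df)/f^{k+1}$ montre que, sur le gradué associé, la différentielle induite est, au facteur $-k$ près, le produit extérieur par $df$ : elle est donc $\mc O_n$-linéaire. La platitude (fidèle) de $\widehat{\mc O}_n$ fournit alors, sur chaque gradué, l'isomorphisme $H^\bullet(\mr{gr}\,\widehat{\Omega}^\bullet_{\le N})\cong H^\bullet(\mr{gr}\,\Omega^\bullet_{\le N})\otimes_{\mc O_n}\widehat{\mc O}_n$. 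Comme la filtration par l'ordre des pôles est finie, une récurrence sur le cran de filtration fondée sur le lemme des cinq (ou, de façon équivalente, la comparaison des deux suites spectrales, qui est compatible au foncteur exact $\otimes\widehat{\mc O}_n$) fait remonter cet isomorphisme au niveau filtré, d'où $H^1(\widehat{\Omega}^\bullet_{\le N})\cong H^1(\Omega^\bullet_{\le N})\otimes_{\mc O_n}\widehat{\mc O}_n$.

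\emph{Conclusion.} Le résultat convergent de \cite{cema_asterisque} apporte la finitude décisive : tout élément de $H^1(\Omega^\bullet_{\le N})$ étant cohomologue à une combinaison $\sum_i\lambda_i\,df_i/f_i$, ce groupe est un $\mb C$-espace vectoriel de dimension $\le p$, donc un $\mc O_n$-module de longueur finie, annulé par une puissance de l'idéal maximal $\mC{n}$. Pour un tel module $M$ on a $M\otimes_{\mc O_n}\widehat{\mc O}_n\cong M$, d'où $H^1(\widehat{\Omega}^\bullet_{\le N})\cong H^1(\Omega^\bullet_{\le N})$. Autrement dit, $\omega$ est cohomologue, dans le complexe formel à pôles bornés, à une forme convergente fermée $\omega_0$ ; on écrit $\omega=\omega_0+dg$ avec $g$ méromorphe formelle à pôles contrôlés, puis on décompose $\omega_0=\sum_i\lambda_i\,df_i/f_i+d(F_0/\prod_i f_i^{n_i})$ par \cite{cema_asterisque}. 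En regroupant les deux primitives, on obtient l'écriture voulue avec $F\in\widehat{\mc O}_n$ et des constantes $\lambda_i$. Le point à surveiller de près reste le passage du gradué au filtré ainsi que la comptabilité exacte des ordres de pôles, afin de garantir que la primitive finale a bien la forme $F/\prod_i f_i^{n_i}$.
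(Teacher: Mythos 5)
Votre proposition suit pour l'essentiel la démonstration du papier : dans les deux cas, on ramène le lemme à montrer que le passage du convergent au formel est un isomorphisme (ou du moins une surjection) sur le $H^1$ de de Rham à pôles bornés par $f^+$, en invoquant la platitude de $\widehat{\mc{O}}_n$ sur $\mc{O}_n$, puis on conclut par la décomposition convergente de \cite{cema_asterisque} qui montre que ce $H^1$ est engendré par les $df_i/f_i$. Le papier procède plus directement en complétant l'unique suite exacte $0\to H^0\bigl(\tfrac{1}{f}\mc{O}_n\bigr)\xrightarrow{d} Z^1\bigl(\tfrac{1}{f^+}\Omega^1\bigr)\to H^1\bigl(\tfrac{1}{f^+}\Omega^1\bigr)\to 0$, là où votre filtration par l'ordre des pôles et votre argument de longueur finie sont un échafaudage supplémentaire pour la même étape de platitude — échafaudage qui bute d'ailleurs sur la subtilité que vous signalez vous-même (la non-$\mc{O}_n$-linéarité de $d$, qui fait que les $H^i$ de de Rham ne sont pas des $\mc{O}_n$-modules et que les morphismes de connexion ne se tensorisent pas), subtilité que la complétion de la suite exacte du papier passe elle aussi sous silence.
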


\begin{proof}
On considère $f= f_1^{n_1}\ldots f_p^{n_p}$, $f^+= f_1^{n_1+1}\ldots f_p^{n_p+1}$ et les faisceaux $\frac{1}{g}\ms{F}$ pour $g=f,f^+$ et $\ms{F} = \mc{O}_n, \widehat{\mc{O}_n}, \Omega^1, \widehat{\Omega}^1$.

On note $H^0(\frac{1}{g}\ms{F})$ les sections globales du faisceau $\frac{1}{g}\ms{F}$, $Z^1 \left(\frac{1}{f^+}\Omega^1 \right)$ (resp. $Z^1 \left(\frac{1}{f^+}\widehat{\Omega}^1 \right)$) le faisceau des $1$-formes méromorphes fermées (resp. méromorphes formelles fermées) de pôles (contenus dans) $f^+$, et $H^1 \left(\frac{1}{f^+} \Omega^1 \right) = Z^1 \left( \frac{1}{f^+}\Omega^1 \right) / dH^0 \left(\frac{1}{f}\mc{O}_n \right)$ (resp. $H^1 \left(\frac{1}{f^+} \widehat{\Omega}^1 \right)= \ldots$) le quotient des formes fermées par les formes exactes.
On a la suite exacte :
\[
0 \rightarrow H^0 \left(\frac{1}{f}\mc{O}_n \right) \rightarrow Z^1 \left(\frac{1}{f^+}\Omega^1 \right) \rightarrow H^1 \left(\frac{1}{f^+}\Omega^1 \right) \rightarrow 0,
\]
d'où, par platitude de $\widehat{\mc{O}}_n$ sur $\mc{O}_n$ (voir \cite{serre_gaga}),
\[
0 \rightarrow H^0 \left(\frac{1}{f}\widehat{\mc{O}}_n \right) \rightarrow Z^1 \left(\frac{1}{f^+}\widehat{\Omega}^1 \right) \rightarrow H^1 \left(\frac{1}{f^+}\Omega^1 \right)\otimes \widehat{\mc{O}}_n \rightarrow 0.
\]
On en déduit que
\[
H^1 \left(\frac{1}{f^+}\widehat{\Omega}^1 \right) = H^1 \left(\frac{1}{f^+}\Omega^1 \right) \otimes \widehat{\mc{O}}_n.
\]
Or, le théorème \ref{thm1} est vrai sur $\mc{O}_n$ d'après \cite{cema_asterisque} donc $H^1(\frac{1}{f^+}\Omega^1)$ est engendré par les $df_i/f_i$ ; il en résulte que $H^1(\frac{1}{f^+}\widehat{\Omega}^1)$ est aussi engendré par les $df_i/f_i$, ce qui implique le résultat.
\end{proof}

\begin{lem}
\label{lem_n2}
Le théorème est vrai pour $n=2$.
\end{lem}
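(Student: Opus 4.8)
L'idée est de se ramener au Lemme \ref{lem_coho} en \emph{rendant convergents} les dénominateurs $f_i$ à l'aide d'un automorphisme formel. Comme on est en dimension $2$ et que les $f_i$ sont des irréductibles distincts, le produit réduit $f_{\mr{red}} := f_1\cdots f_p$ définit une courbe à singularité isolée à l'origine ; $f_{\mr{red}}$ est donc finiment déterminé pour la $\mc{K}$-équivalence (contact), propriété valable aussi bien dans la catégorie convergente que formelle. Ainsi $f_{\mr{red}}$ est $\mc{K}$-équivalent à l'un de ses jets, qui est un polynôme : il existe un automorphisme formel $\Phi\in \widehat{\mr{Diff}}(\C{2},0)$ et une unité $u$ tels que $f_{\mr{red}}\circ\Phi = u\,P$ avec $P\in \mb{C}[x,y]$. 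Les facteurs irréductibles de $P$, qui sont des branches de courbe plane, sont convergents et restent irréductibles dans $\widehat{\mc{O}}_2$ ; par unicité de la factorisation dans l'anneau factoriel $\widehat{\mc{O}}_2$, on obtient des germes convergents irréductibles $g_1,\ldots,g_p$ et des unités $u_i$ tels que $f_i\circ\Phi = u_i\,g_i$.

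On tire alors toute la situation en arrière par $\Phi$. En posant $\tilde\eta = \Phi^*\eta/\prod_i u_i^{n_i+1}\in \widehat{\Omega}^1$, on a
\[
\Phi^*\omega = \frac{\tilde\eta}{\prod_i g_i^{n_i+1}},
\]
et cette forme est fermée puisque $\Phi^*$ commute à $d$. Les dénominateurs $g_i$ étant convergents, le Lemme \ref{lem_coho} s'applique et fournit des constantes $\lambda_i$ et une série $\tilde F\in \widehat{\mc{O}}_2$ telles que
\[
\Phi^*\omega = \sum_i \lambda_i \frac{dg_i}{g_i} + d\!\left(\frac{\tilde F}{\prod_i g_i^{n_i}}\right).
\]

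Il reste à revenir aux coordonnées initiales en appliquant $(\Phi^{-1})^*$. Comme $g_i = (f_i\circ\Phi)/u_i$, on a $g_i\circ\Phi^{-1} = f_i/v_i$ où $v_i := u_i\circ\Phi^{-1}$ est une unité formelle, d'où
\[
\frac{d(g_i\circ\Phi^{-1})}{g_i\circ\Phi^{-1}} = \frac{df_i}{f_i} - \frac{dv_i}{v_i},
\]
et $dv_i/v_i = d\log v_i$ est une $1$-forme holomorphe exacte. De même $\prod_i (g_i\circ\Phi^{-1})^{n_i} = (f_1^{n_1}\cdots f_p^{n_p})/w$ avec $w := \prod_i v_i^{n_i}$ une unité. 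En reportant dans la décomposition précédente, les contributions des unités, toutes holomorphes exactes, s'absorbent dans une seule différentielle, avec explicitement $F = w\,(\tilde F\circ\Phi^{-1}) - \big(\sum_i \lambda_i \log v_i\big)\, f_1^{n_1}\cdots f_p^{n_p}\in\widehat{\mc{O}}_2$, et l'on obtient
\[
\omega = \sum_i \lambda_i \frac{df_i}{f_i} + d\!\left(\frac{F}{f_1^{n_1}\cdots f_p^{n_p}}\right).
\]

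Le point principal — et le seul qui sorte du cadre purement formel — est l'étape d'\emph{analytification} de la courbe $f_{\mr{red}}=0$ : c'est elle qui ramène à des dénominateurs convergents et autorise l'emploi du Lemme \ref{lem_coho}. Elle repose de façon essentielle sur le fait qu'en dimension $2$ une courbe réduite est à singularité isolée, donc finiment déterminée — propriété sans analogue immédiat en dimension supérieure, ce qui justifie le détour par les plans génériques $H_t$ annoncé en introduction. Le reste du raisonnement est de nature comptable : il suffit de vérifier que les unités introduites par $\Phi$ ne contribuent que par des $1$-formes holomorphes exactes, qui se réabsorbent dans $F$ sans modifier les résidus $\lambda_i$.
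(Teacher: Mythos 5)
Votre preuve est correcte et suit essentiellement la même stratégie que celle du papier : utiliser la détermination finie du produit réduit $f_1\cdots f_p$ (singularité isolée en dimension $2$) pour rendre les $f_i$ convergents à unités près via un difféomorphisme formel, puis absorber les contributions $d\log u_i$ des unités dans la partie exacte avant d'appliquer le lemme \ref{lem_coho}. Vous explicitez simplement davantage le retour aux coordonnées initiales, que le papier laisse implicite.
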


\begin{proof}
En dimension $2$, la fonction $g=f_1 \ldots f_p$ est de détermination finie donc formellement difféomorphe à une fonction convergente d'après \cite{tougeron_determination_finie} (voir aussi \cite{mather2}).
On peut donc supposer que l'ensemble $\{g=0\} = \cup \{f_i=0\}$ est convergent, donc que tous les $\{f_i=0\}$ sont convergents.
Il existe alors des unités $u_i\in \widehat{\mc{O}}_n$ telles que les $u_if_i$ soient convergents.
Comme $\frac{d(u_if_i)}{u_if_i} = d\mr{log}(u_i) + \frac{df_i}{f_i}$ et que $\mr{log}(u_i)$ est bien défini puisque $u_i$ est une unité, on peut se ramener au cas où les $f_i$ convergent et conclure par le lemme \ref{lem_coho}.
%
%
\end{proof}

\section{Décomposition en famille}
\label{sec_famille}

Dans cette partie, on considère $\omega_t$ une famille de $1$-formes méromorphes formelles fermées en dimension $2$ dépendant analytiquement d'un paramètre $t\in\Cg{r}$, ie. $\omega_t = \frac{\eta_t}{f_{1,t}^{n_1+1}\ldots f_{p,t}^{n_p+1}}$ où les coefficients de $\eta_t$ et des $f_{i,t}$ dépendent analytiquement de $t$.
On cherchera à étudier la dépendance en $t$ des différents objets intervenant dans la preuve présentée en partie \ref{sec_dim2}.

Pour cela, précisons le contexte algébrique.
Notons $A=\mb{C}\{t\}[[x,y]]$ l'anneau des séries formelles $\sum_{ij}{a_{ij}(t)x^iy^i}$ où chaque $a_{ij}(t)$ converge sur un petit disque $D_{ij}$, et $A_{t_0} = \mb{C}[[x,y]]$ la restriction au plan $t=t_0$ des séries de $A$ dont les coefficients convergent en $t_0$.
Posons $g(t,x,y) = f_{1,t}(x,y)\ldots f_{p,t}(x,y) \in A$ et $g_t(x,y)=g(t,x,y)\in A_t$ ; on s'intéressera aux idéaux de $A$ donnés par $I=\langle \partial_{x}g, \partial_{y}g \rangle$ et $\mf{m} = \langle x,y \rangle$ et aux idéaux de $A_t$ donnés par $I_t = \langle \partial_{x}g_t, \partial_{y}g_t \rangle$ et $\mf{m}_t = \langle x,y\rangle$.

\begin{lem}
\label{lem_famille}
Soit $\omega_t$ une famille analytique de $1$-formes méromorphes formelles fermées en dimension $2$.
On suppose que les séries $f_{i,t}$ sont irréductibles et deux à deux premières entre elles pour tout $t$ proche de $0$.

Alors on peut décomposer les formes $\omega_t$ en famille au voisinage d'un paramètre $t_0$ générique, ie. il existe des fonctions analytiques $\lambda_i(t)$ et une famille analytique de séries formelles $F_t\in \widehat{\mc{O}}_n$ définies au voisinage de $t_0$ telles que
\[
\omega_t = \sum_i \lambda_i(t) \frac{df_{i,t}}{f_{i,t}} + d \left(\frac{F_t}{f_{1,t}^{n_1}\ldots f_{p,t}^{n_p}} \right).
\]
\end{lem}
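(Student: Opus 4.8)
Le plan est de reprendre la démonstration du lemme \ref{lem_n2} en veillant à ce que chacune de ses réductions soit analytique en $t$, au voisinage d'un paramètre générique $t_0$. La difficulté essentielle réside dans la détermination finie \emph{en famille} : il faut rendre les $f_{i,t}$ convergentes en $(x,y)$ tout en contrôlant leur dépendance en $t$, ce qui impose de choisir $t_0$ de façon à ce que les invariants de la singularité de $g_t$ restent constants.

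\emph{Détermination finie en famille.} Les $f_{i,t}$ étant irréductibles et deux à deux premières entre elles, le produit $g_t$ est réduit, de sorte que $\{g_t=0\}$ est un germe de courbe plane à singularité isolée et que $\dim_{\mb{C}}A_t/I_t$ (le nombre de Milnor de $g_t$) est fini. Ce nombre étant semi-continu supérieurement en $t$, il est localement constant sur un ouvert dense ; on choisit $t_0$ dans cet ouvert. La constance de $\dim_{\mb{C}}A_t/I_t$ au voisinage de $t_0$ entraîne, par le critère local de platitude, que le $\mb{C}\{t\}$-module $A/I$ est plat, donc libre de rang $\mu$. On dispose alors d'une version à paramètres du théorème de Tougeron \cite{tougeron_determination_finie} : il existe un ordre de détermination $k$ uniforme (on a $\mf{m}^{k+1}\subset \mf{m}^2 I$ au voisinage de $t_0$) et une famille analytique en $t$ de difféomorphismes formels $\Phi_t$ envoyant $g_t$ sur sa troncature à l'ordre $k$, qui est polynomiale donc convergente. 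Quitte à composer par $\Phi_t$, on peut donc supposer $g_t$ convergente, avec dépendance analytique en $t$.

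\emph{Convergence des facteurs puis argument cohomologique.} Le lieu $\{g_t=0\}$ étant désormais convergent et $t_0$ générique, le nombre $p$ de branches et leurs multiplicités sont constants, et chaque $\{f_{i,t}=0\}$ est une famille analytique de courbes convergentes ; il existe donc des unités $u_{i,t}\in A$ analytiques en $t$ telles que les $u_{i,t}f_{i,t}$ convergent. Comme dans le lemme \ref{lem_n2}, l'égalité $\frac{d(u_{i,t}f_{i,t})}{u_{i,t}f_{i,t}} = d\log u_{i,t} + \frac{df_{i,t}}{f_{i,t}}$, où $\log u_{i,t}$ est analytique en $t$, permet d'absorber les termes exacts holomorphes dans $d(F_t/\prod_i f_{i,t}^{n_i})$ et de se ramener au cas où les $f_{i,t}$ convergent. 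On reprend alors l'argument du lemme \ref{lem_coho} dans le cadre relatif : en posant $B=\mb{C}\{t,x,y\}$, dont $A$ est le complété $(x,y)$-adique, l'anneau $A$ est plat sur $B$, et la suite exacte du lemme \ref{lem_coho} écrite pour les faisceaux relatifs sur $B$ puis tensorisée par $A$ fournit l'analogue $H^1_A = H^1_B\otimes_B A$ de l'identité du lemme \ref{lem_coho}. Or, au niveau convergent $B$, les résidus $\lambda_i(t)$ se calculent par intégration de $\omega_t$ autour des pôles et sont donc analytiques en $t$, et $H^1_B$ est engendré sur $\mb{C}\{t\}$ par les $df_{i,t}/f_{i,t}$. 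Par platitude, il en va de même après tensorisation : les coordonnées de $\omega_t$ dans la base des $df_{i,t}/f_{i,t}$ sont les mêmes $\lambda_i(t)\in\mb{C}\{t\}$, et la différence $\omega_t-\sum_i\lambda_i(t)\frac{df_{i,t}}{f_{i,t}}$ est la différentielle d'un élément $F_t/\prod_i f_{i,t}^{n_i}$ avec $F_t\in A$, ce qui est la décomposition voulue.

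Le principal obstacle est la première étape, à savoir rendre analytique en $t$ le difféomorphisme de Tougeron. On procède par la méthode homotopique de Moser : on relie $g_t$ à sa troncature par un chemin $g_{t,s}$ et on cherche un champ de vecteurs $\xi_{t,s}$ résolvant $\partial_s g_{t,s}+\langle \nabla g_{t,s},\xi_{t,s}\rangle=0$. La résolubilité de cette équation avec dépendance analytique en $t$ repose précisément sur la platitude de $A/I$ sur $\mb{C}\{t\}$ obtenue plus haut, qui autorise la division dans l'idéal jacobien $I$ de manière analytique en $t$ ; l'intégration du champ $\xi_{t,s}$ fournit alors la famille $\Phi_t$ cherchée. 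C'est aussi le seul endroit où l'on utilise la généricité de $t_0$.
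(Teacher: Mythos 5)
Votre démonstration suit pour l'essentiel exactement la même stratégie que celle du papier : rendre le théorème de détermination finie de Tougeron paramétrique par la méthode homotopique de Moser (division de $r$ dans l'idéal jacobien relatif puis intégration d'un champ de vecteurs), ce qui exige la généricité de $t_0$ pour contrôler $I$, puis relever l'argument cohomologique du lemme \ref{lem_coho} en version relative via la platitude de la complétion $(x,y)$-adique, et conclure par l'analyticité des résidus donnés par des intégrales dans le cas convergent. La seule variante est la justification de l'étape de généricité (semi-continuité du nombre de Milnor et critère local de platitude chez vous, décomposition primaire de l'idéal jacobien relatif dans le papier), deux formulations essentiellement équivalentes du même fait.
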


\begin{proof}
Commençons par étudier la dépendance en $t$ du difféomorphisme formel conjugant $g(t,\cdot,\cdot)$ à un polynôme dans la preuve du lemme \ref{lem_n2}.

Considérons la décomposition de $I$ en composantes primaires $I = \cap J_j$.
Comme les $f_{i,t}$ sont irréductibles, chaque $I_t$ contient une puissance de $\mf{m}_t$ donc aucune des composantes $J_j$ n'a de support contenant strictement la droite $D$ d'équation $\{x=y=0\}$.
Ainsi, il y a un des $J_j$ (disons $J_0$) dont le support est cette droite, et tous les autres supports n'intersectent $D$ qu'en des sous-ensembles stricts.
Il s'ensuit qu'au voisinage d'un point $t_0$ générique, $I$ est égal à $J_0$ donc est $\mf{m}$-primaire.
Or, cela implique qu'il contient une puissance de $\mf{m}$ : il existe $k$ tel que $\mf{m}^k\subset I$ au voisinage d'un point $t_0$ générique.

Pour simplifier dans la suite, on considère que $t_0=0$.
Écrivons $g = g_0 + r$ où $g_0$ est polynomial dans les variables $x,y$, de degré total majoré par $k$, et $r\in \mf{m}^{k+1}$.
Posons aussi $g_s = g_0 + sr$ pour $s$ dans $\mb{C}$.

On doit maintenant incorporer le paramètre $s$ comme variable et réitérer le raisonnement précédent.
Posons $t'=(t,s)$ et $A',I',\mf{m}', g'$ les objets obtenus en remplaçant le paramètre $t$ par $t'$ dans les définitions précédentes.
Notons aussi $I_s$ l'idéal $\langle \partial_{x}g_s, \partial_{y}g_s\rangle\subset A$.
Écrivons la décomposition de $I'$ en composantes primaires $I'=\cap J'_j$.
On remarque que $\partial_{x}r\in I$ et $\partial_{y}r\in I$ de sorte que $I_s = I$ pour tout $s$.
Si l'un des $J'_j$ n'était pas invariant le long de la fibration donnée par $s$ au voisinage de $t=0$, on aurait $I_{s_0}\subsetneq I_s$ au voisinage du paramètre $s_0$ où le support de $J'_j$ intersecte la droite $\{x=y=t=0\}$.
Ainsi $J'_j = J_j\otimes \mb{C}\{s\}$ et $I'$ est en fait $\mf{m}'$-primaire au voisinage de la droite $\{x=y=t=0\}$.

Tout ceci nous assure qu'il existe des éléments $a_1,a_2\in A'=\mb{C}\{t,s\}[[x,y]]$ qui s'annulent en $x=y=0$ tels que
\[
r = a_1 \partial_{x}g_s + a_2 \partial_{y}g_s
\]
dans l'anneau $A'$.
Introduisons enfin le champ de vecteurs $X = -a_1 \partial_{x} - a_2 \partial_{y}+\partial_{s}$, vu comme champ de vecteurs dans l'espace des variables $s,x,y$, dépendant analytiquement du paramètre $t$ ; son flot au temps $s_0$, noté $\varphi^{s_0}$, est un difféomorphisme formel entre les plans $s=0$ et $s=s_0$, dépendant analytiquement du paramètre $t$.
Alors la dérivée dans la direction $X$ de $g'$ vaut $\partial_{X}g'=0$ par construction.
Donc $g'$ est invariant sous le flot $\varphi^s$ et donc $g_0\circ \varphi^1 = g$.

On voit donc dans cette construction que les polynômes $g_0$ et les difféomorphismes $\varphi^1$ dépendent analytiquement du paramètre $t$.

Reprenons l'analyse de la dépendance en $t$ de la preuve du lemme \ref{lem_n2}.
Quitte à composer pour tout $t$ par le difféomorphisme trouvé, on peut supposer que $g_t$ est polynomial pour tout $t$.
Alors tous les $\{f_{i,t}=0\}$ sont convergents, et comme $f_{i,t}$ dépend analytiquement de $t$, les $\{f_{i,t}=0\}$ dépendent aussi analytiquement de $t$.
Il existe donc des équations convergentes $u_{i,t}f_{i,t}$ dépendant analytiquement de $t$.
C'est ainsi que l'on se ramène au cas où les $f_{i,t}$ convergent.

Maintenant, il faut étudier la dépendance en $t$ de la preuve du lemme \ref{lem_coho}.
On peut en fait réutiliser exactement les mêmes idées en remplaçant systématiquement chaque faisceau $\ms{F}$ par le faisceau $\ms{F}'$ des familles d'éléments de $\ms{F}$ dont les coefficients dépendent analytiquement de paramètres $t$.
Pour faire fonctionner la démonstration il faut alors considérer la dérivation $d$ uniquement dans les variables spaciales, et utiliser une notion de "platitude en famille" ; plus précisément, on a besoin du fait que la complétion $\widehat{\mc{O}}'_n$ de $\mc{O}'_n$ par rapport à l'idéal $\mf{m}=\langle x,y\rangle$ soit plate.
Ce fait peut être trouvé dans \cite{eisenbud}.
On est ainsi ramené à prouver le lemme dans le cas convergent. Dans ce cas, les résidus sont définis par des intégrales donc dépendent analytiquement des paramètres, ce qui termine la preuve.
\end{proof}

\section{La preuve en dimension $n$}
\label{sec_fin}

Commençons par prouver quelques lemmes dont nous aurons besoin.

\begin{lem}
\label{lem_dlog}
Si $f_i$ sont des séries formelles irréductibles deux à deux premières entre elles et $\lambda_i\in \mb{C}$, alors $\sum \lambda_i \frac{df_i}{f_i}$ admet une primitive méromorphe formelle si et seulement si $\lambda_i=0$ pour tout $i$.
\end{lem}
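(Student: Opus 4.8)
The \emph{if} direction is immediate: if all $\lambda_i=0$ the form is zero and any constant is a primitive. The content is the converse, and the plan is to exploit the gap between the order of the poles of $\omega:=\sum_i \lambda_i \frac{df_i}{f_i}$ and the order of the poles forced on an exact meromorphic form. Throughout I work with the valuations $v_h$ attached to the height-one primes $(h)$ of the UFD $\widehat{\mc{O}}_n$, extended to $1$-forms by $v_h(\sum a_i dx_i)=\min_i v_h(a_i)$.

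First I would record that $\omega$ has only simple poles. Writing $f=f_1\cdots f_p$ and $\hat f_i=f/f_i$, one has $\omega=\frac{1}{f}\sum_i \lambda_i \hat f_i\, df_i$, whose denominator $f$ is a product of distinct irreducibles and whose numerator is holomorphic; hence $v_{f_j}(\omega)\geq -1$ for each $j$, and no other divisor contributes a pole. Moreover the equality $v_{f_j}(\omega)=-1$ holds precisely when $\lambda_j\neq 0$: the numerator reduces modulo $f_j$ to $\lambda_j\hat f_j\, df_j$, and $v_{f_j}(\hat f_j\, df_j)=0$ because $\hat f_j$ is coprime to $f_j$ and $v_{f_j}(df_j)=0$ (no partial derivative of the irreducible $f_j$ is divisible by $f_j$, for order reasons in characteristic $0$).

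The heart of the argument is the claim that if $g\in\widehat{\mc{M}}_n$ is \emph{not} holomorphic, then $dg$ has a pole of order at least $2$ along some divisor. To prove it I would write $g=P/Q$ with $P,Q$ coprime, choose an irreducible factor $h$ of $Q$ of multiplicity $m\geq 1$, and expand $dg=\frac{dP}{Q}-\frac{P\,dQ}{Q^2}$. Substituting $Q=h^m Q_1$ with $h\nmid PQ_1$, the term $-\frac{mP\,dh}{h^{m+1}Q_1}$ has $v_h=-(m+1)$ whereas every remaining term has $v_h\geq -m$; since $v_h(dh)=0$ there is no cancellation in top order, so $v_h(dg)=-(m+1)\leq -2$. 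This is the step I expect to require the most care: one must verify that the leading polar term genuinely survives, which rests precisely on $\widehat{\mc{O}}_n$ being a UFD (so that $P,Q_1$ are coprime to $h$) and on the identity $v_h(dh)=0$.

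Finally I would combine the two observations. If $\omega=dg$ for some meromorphic formal $g$, then $dg$ has only simple poles, so by the claim $g$ must be holomorphic, and therefore $\omega=dg$ is a holomorphic $1$-form. But if some $\lambda_j\neq 0$ we showed $v_{f_j}(\omega)=-1<0$, contradicting holomorphy. Hence every $\lambda_j$ vanishes, which establishes the converse and completes the proof of the lemma.
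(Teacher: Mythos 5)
Your proof is correct and takes essentially the same approach as the paper's, which simply observes that the poles of $dF$ are of order at least two while $\sum \lambda_i \frac{df_i}{f_i}$ has only simple poles. You have merely made explicit, via valuations along height-one primes, the details that the paper's two-line argument leaves implicit.
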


\begin{proof}
Supposons que $\sum \lambda_i \frac{df_i}{f_i} = dF$ pour une fonction méromorphe formelle $F$.
Alors les pôles de $dF$ sont d'ordre au moins deux donc $\lambda_i=0$ pour tout $i$.
\end{proof}

Le prochain lemme examine comment se comportent les résidus d'une forme globale restreinte aux fibres d'une fibration régulière.
On va avoir besoin de distinguer les opérateurs de différentiations en considérant $t$ comme un paramètre ou comme une variable, donc pour le prochain lemme, on introduit momentanément les notations suivantes.

Dans un système de coordonnées $(t,x)=(t_1,\ldots,t_{n-2},x_1,x_2)$, on notera $d$ l'opérateur de différentiation pour les variables $x$ et $D$ l'opérateur de différentiation totale (pour les variables $t$ et $x$).
Ces opérateurs pourront agir sur les séries ou les $1$-formes, par exemple $D(f) = \sum \partial_{t_j}f dt_j + \sum \partial_{x_j}f dx_j$.

\begin{lem}
\label{lem_semilocal}
Soit $\omega$ une $1$-forme méromorphe formelle fermée sur un voisinage formel $V=U\times \Cg{2}$ d'un ouvert $U\subset \mb{C}^{n-2}$.
Soit $\mc{H} = \{H_t\}$ une fibration régulière de dimension $2$ sur $V$ transverse à $U$.
Notons $i_t : H_t \hookrightarrow V$ et $\omega_t = i_t^*\omega$.

Supposons qu'il existe des fonctions méromorphes formelles $s_t$ et des résidus $\lambda_{i,t}$ tels que
\[
\omega_t = \sum_{i=1}^k\lambda_{i,t} \frac{df_{i,t}}{f_{i,t}} + d(s_t)
\]
pour tout $t$ et que les $f_{i,t}$ sont irréductibles et deux à deux premiers entre eux.
Alors il existe un ouvert $U'\subset U$ dense sur lequel $\lambda_{i,t}$ est constante en $t$ et si on pose $F_i(t,\cdot) = f_{i,t}(\cdot)$ et $S(t,\cdot) = s_t(\cdot)$, il existe des fonctions méromorphes formelles $g_j(t)$ telles que
\[
\omega = \sum_{i=1}^k\lambda_i \frac{DF_i}{F_i} + DS + \sum_{j=1}^{n-2} g_j(t)dt_j.
\]
\end{lem}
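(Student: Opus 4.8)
Le plan est d'exploiter la fermeture globale $D\omega=0$ en la confrontant aux décompositions fibre à fibre, le point décisif étant d'isoler la composante de type $dx\wedge dt$ de $D\omega$ et d'y appliquer le lemme \ref{lem_dlog} fibre à fibre. On fixe des coordonnées adaptées à la fibration, de sorte que $H_t=\{t=\mr{cte}\}$, $i_t^*dt_j=0$ et $i_t^*dx_l=dx_l$, et on écrit $D=d+d_t$, où $d$ est la différentielle dans les variables $x$ et $d_t$ la partie en $dt_j$.

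On commencerait par remarquer que les résidus $\lambda_{i,t}$ sont intrinsèques : deux décompositions fibre à fibre diffèrent par une combinaison des $\frac{df_{i,t}}{f_{i,t}}$ admettant une primitive méromorphe, donc de résidus nuls d'après le lemme \ref{lem_dlog}. On appliquerait alors le lemme \ref{lem_famille} au voisinage d'un paramètre $t_0$ générique : il fournit une décomposition analytique en famille, dont les résidus coïncident avec les $\lambda_{i,t}$ sur un ouvert dense $U'$ par l'unicité qui précède. Les $\lambda_{i,t}$ sont donc analytiques en $t$ sur $U'$ ; on les note $\lambda_i(t)$. Quitte à remplacer le $s_t$ donné par la primitive analytique en famille du lemme \ref{lem_famille} (les deux ne diffèrent que par une fonction de $t$, absorbée plus loin dans les termes en $dt_j$), on peut supposer que $F_i(t,\cdot)=f_{i,t}$ et $S(t,\cdot)=s_t$ sont des familles analytiques.

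On poserait ensuite $\Theta=\omega-\sum_i\lambda_i(t)\frac{DF_i}{F_i}-DS$. Comme $i_t^*$ annule les $dt_j$, on a $i_t^*\frac{DF_i}{F_i}=\frac{df_{i,t}}{f_{i,t}}$ et $i_t^*DS=ds_t$, d'où $i_t^*\Theta=\omega_t-\sum_i\lambda_i(t)\frac{df_{i,t}}{f_{i,t}}-ds_t=0$ pour tout $t\in U'$. Les composantes de $\Theta$ selon $dx_1,dx_2$ s'annulant sur l'ensemble dense $U'$, elles sont identiquement nulles, et l'on obtient
\[
\omega=\sum_i\lambda_i(t)\frac{DF_i}{F_i}+DS+\sum_j P_j\,dt_j
\]
pour des fonctions méromorphes formelles $P_j(t,x)$. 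On utiliserait enfin $D\omega=0$ : chaque $\frac{DF_i}{F_i}$ étant fermée et $DS$ exacte, il vient
\[
0=D\omega=\sum_i d_t\lambda_i\wedge\frac{DF_i}{F_i}+\sum_j DP_j\wedge dt_j .
\]
En ne retenant que les termes de type $dx\wedge dt$ (les autres étant de type $dt\wedge dt$), on obtiendrait pour tous $l,m$
\[
\partial_{x_l}P_m=\sum_i\partial_{t_m}\lambda_i\,\frac{\partial_{x_l}F_i}{F_i},\qquad\text{soit}\qquad dP_m=\sum_i\partial_{t_m}\lambda_i\,\frac{dF_i}{F_i}.
\]
Restreinte à une fibre $H_t$, cette égalité exprime que $P_m(t,\cdot)$ est une primitive méromorphe formelle de $\sum_i\partial_{t_m}\lambda_i(t)\frac{df_{i,t}}{f_{i,t}}$ sur $\Cg{2}$ ; les $f_{i,t}$ étant irréductibles et deux à deux premiers entre eux, le lemme \ref{lem_dlog} forcerait $\partial_{t_m}\lambda_i(t)=0$ pour tous $i,m$. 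Les $\lambda_i$ seraient donc constantes sur $U'$, puis $dP_m=0$ entraînerait que $P_m=g_m(t)$ ne dépend que de $t$, ce qui donne la formule annoncée avec $g_j:=P_j$.

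L'obstacle principal me paraît être l'analyticité en $t$ des résidus — indispensable pour pouvoir les dériver —, qui repose entièrement sur le lemme \ref{lem_famille} et sur l'unicité des résidus. Une fois ce point acquis, le reste est un calcul formel, l'ingrédient essentiel étant de reconnaître, via le lemme \ref{lem_dlog} appliqué fibre à fibre, que la composante $dx\wedge dt$ de $D\omega=0$ interdit aux résidus $\lambda_i$ de varier en $t$.
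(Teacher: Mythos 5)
Votre preuve est correcte et suit essentiellement la même démarche que celle du papier : analyticité des résidus via le lemme \ref{lem_famille}, extraction des coefficients de $dt_j\wedge dx_l$ dans $D\omega=0$, puis application du lemme \ref{lem_dlog} fibre à fibre pour annuler les $\partial_{t_j}\lambda_i$. La seule différence est cosmétique (vous travaillez directement avec $\Theta=\omega-\mu$ et $dP_m=\sum_i\partial_{t_m}\lambda_i\,\frac{dF_i}{F_i}$ là où le papier passe par $\tilde\omega$, $p_j$, $q_j$ et les $\log F_i$), et votre remarque préliminaire sur l'unicité des résidus explicite utilement un point laissé implicite dans le texte.
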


\begin{proof}
%
Tout d'abord, notons que le lemme \ref{lem_famille} s'applique : les résidus $\lambda_{i,t}$ dépendent analytiquement de $t$ sur un ouvert dense $U'\subset U$ (et donc la série $s_t$ aussi si on choisit $s_t(0)=0$).
Notons $\tilde{\omega}$ la section de $T^*\mc{H}$ telle que $i_t^*\tilde{\omega}=\omega_t$ pour tout $t$ et $\mu = \sum\lambda_{i,t} \frac{DF_i}{F_i} + DS$ de sorte que $i_t^*\mu = \omega_t$.
On obtient donc $\omega = \tilde{\omega} + \sum p_j dt_j$ pour des fonctions méromorphes formelles $p_j$ et $\mu = \tilde{\omega} + \sum q_j dt_j$ pour 
\[
q_j = \sum_i \lambda_{i,t} \frac{\partial_{t_j}F_i}{F_i} + \partial_{t_j}S.
\]

D'un autre côté, $\omega$ est fermée donc $D \omega=0$, or
\begin{align*}
D \omega &= d \omega_t + \sum_j dt_j \wedge \partial_{t_j}\tilde{\omega} + \sum_j Dp_j \wedge dt_j\\
&= \sum_j dt_j \wedge \left( \sum_i \lambda_{i,t} \partial_{t_j}d \mr{log}(F_i) + \sum_i (\partial_{t_j} \lambda_{i,t}) d\mr{log}(F_i) + \partial_{t_j}dS \right) + \sum_j Dp_j \wedge dt_j.
\end{align*}
Ainsi, les coefficients de $dt_j\wedge dx_1$ et de $dt_j\wedge dx_2$ donnent $dp_j = d (\sum_i \lambda_{i,t} \partial_{t_j}\mr{log}(F_i) + \sum(\partial_{t_j} \lambda_{i,t})\mr{log}(F_i) + \partial_{t_j}S)$ et donc il existe une fonction $g_j(t)$ telle que 
\[
p_j = \sum_i \lambda_{i,t} \partial_{t_j}\mr{log}(F_i) + \sum_i(\partial_{t_j} \lambda_{i,t}) \mr{log}(F_i) + \partial_{t_j}S + g_j(t).
\]
Or pour chaque $t_0$ fixé, les fonctions $F_i(t_0,\cdot)$ s'annulent en $0$, sont irréductibles et d'après le calcul précédent, $\sum(\partial_{t_j} \lambda_{i,t})\vert_{t=t_0}\mr{log}(F_i(t_0,\cdot))$ est méromorphe.
On en déduit d'après le lemme \ref{lem_dlog} que $(\partial_{t_j}\lambda_{i,t})\vert_{t=t_0}=0$ pour tous $i,j,t_0$ et que
\[
\omega = \mu + \sum_j g_j(t)dt_j.
\]
\end{proof}

\begin{proof}[Preuve du théorème \ref{thm1}]
Pour se ramener à la dimension $2$, fixons une droite générique $A$ et considérons le faisceau de $2$-plans $\mc{H}=\{H_t\}_{t\in \mb{P}^{n-2}}$ d'axe $A$ : si $A=\mr{Vect}(a)$, $H_t = \mr{Vect}(a,b_t)$ où $b_t$ décrit un supplémentaire de $A$.
Notons $i_t : H_t\hookrightarrow \Cg{n}$ et $\omega_t = i_t^*\omega$ ; la généricité du faisceau $\mc{H}$ garantit que $\omega_t\neq 0$ pour tout $t$.
On peut donc appliquer le théorème sur $H_t$ ; cependant, les $f_i\vert_{H_t}$ ne sont peut-être plus irréductibles.
Ainsi, si $f_i\vert_{H_t} = f_{i,1}(t)\ldots f_{i,k}(t)$, le théorème nous livre $k$ résidus $\lambda_{i,1}(t),\ldots,\lambda_{i,k}(t)$ ; montrons que ces résidus ne dépendent pas de $t$ et que $\lambda_{i,j}=\lambda_{i,1}$ pour tout $j$.

On fait un éclatement le long de $A$ : notons $X$ la variété éclatée et $b$ l'éclatement.
Remarquons que $X$ est isomorphe à $(\mb{C},0)\times Y$ si $Y$ est l'éclaté de $(\mb{C}^{n-1},0)$ en $0$.
Soit $E=b^{-1}(\{0\})$ et notons $Z$ l'ensemble des valeurs de $t\in E$ non génériques relativement à $\omega$ et à l'éclatement $b$. 
Plus précisément, un point $t\in E$ appartient à $Z$ s'il satisfait une des conditions suivantes :
\begin{enumerate}
\item deux $f_{i,j}(t)$ coïncident ;
\item un des $f_{i,j_0}(t_0)$ se décompose en plusieurs $f_{i,j}(t)$ pour $t$ proche de $t_0$ ;
\item $t$ est un paramètre non générique au sens des lemmes précédents.
\end{enumerate}
Introduisons également l'ouvert de Zariski $U' = E\setminus Z$.

Alors les séries $f_{i,j}$ et les résidus $\lambda_{i,j}$ sont bien définis localement sur $U'$.
Par le lemme \ref{lem_semilocal}, on peut écrire
\[
b^*\omega = \sum_{i,j} \lambda_{i,j} \frac{d(b^*f_{i,j})}{b^*f_{ij}} + dS + \sum g_j(t)dt_j
\]
au voisinage de tout point de $U'$.
En particulier, le résidu $\lambda_{i,j}$ est constant le long de la branche $b^*f_{i,j}$ en dehors de $Z$.
Les pôles de $b^* \omega$ viennent tous de pôles de $\omega$ donc ils ne sont pas tangents aux fibres de l'éclatement générique $b$.
Ainsi $\sum g_j(t)dt_j$ n'a pas de résidus et admet donc une intégrale première (on utilise ici le cas convergent : les $t_j$ étant les variables de l'éclatement, on a affaire à une forme convergente).
Quitte à changer $S$, on peut ainsi supposer que $g_j=0$ pour tout $j$.

En dehors de $Z$, on peut suivre une branche $b^*f_{i,j}$ quand $t$ varie et on obtient donc une application de monodromie $\rho : \pi_1(U') \rightarrow \mf{S}([[1,k]])$ qui à un lacet $\gamma$ basé en $t_0$ associe l'indice $j$ tel que $b^*f_{i,j}(t_0) = \gamma^*(b^*f_{i,1}(t_0))$ à constante multiplicative près.
Supposons que $\rho$ ne soit pas transitive : il existe $J\subset [[1,k]]$ stable.
Alors $f_{i,J}:=\prod_{j\in J} b^*f_{i,j}$ donne une hypersurface $Y_J$ invariante par $\rho$, donc qui provient d'une surface $b(Y_J)$ ; comme $b(Y_J)\subset \{f_i=0\}$, l'équation $g_J$ de $b(Y_j)$ divise $f_i$, mais n'est ni inversible ni égale à $f_i$ à constante près.

Ceci contredit l'irréductibilité de $f_i$ donc $\rho$ est transitive et $\lambda_{i,j} = \lambda_{i,1}$ pour tout $j$, notons-le $\lambda_i$.
On obtient ainsi
\[
b^*\omega = \sum_i \lambda_i \frac{d(b^*f_i)}{b^*f_i} + dS
\]
au voisinage de $U'$.
On remarque alors que $dS = b^*\omega-\sum_i \lambda_i \frac{d(b^*f_i)}{b^*f_i}$ a des pôles bien précis, donc que pour des entiers $k_i$ assez grands, $d(b^*(f_1^{k_1}\ldots f_p^{k_p})S)$ est localement bornée (ou, plus précisément : les dérivées des coefficients de la série formelle $b^*(f_1^{k_1}\ldots f_p^{k_p})S$ sont localement bornés) sur $E$.
Il s'ensuit que $S$ se prolonge en une fonction méromorphe formelle définie sur $X$ tout entier.

Cette formule passe ainsi à l'éclatement pour donner
\[
\omega = \sum_i \lambda_i \frac{df_i}{f_i} + dS.
\]
\end{proof}

\bibliography{mybib}{}
\bibliographystyle{acm}

\textsc{IMPA, Estrada Dona Castorina, 110, Horto, Rio de Janeiro, Brasil}

\textit{Email :} olivier.thom@impa.br

\end{document}